\newcommand{\FrameboxA}[2][]{#2}
\newcommand{\Framebox}[1][]{\FrameboxA}
\renewcommand{\pmod}[1]{{\ifmmode\text{\rm\ (mod~$#1$)}\else\discretionary{}{}{\hbox{ }}\rm(mod~$#1$)\fi}}
\newcommand{\abs}[1]{\lvert#1\rvert}
\newcommand{\Z}{\mathbb{Z}}
\newcommand{\lcm}{\mathop{\rm lcm }}
\newcommand{\li}{\mathop{\rm li }}
\newtheorem{theorem}{Theorem}
\newtheorem{lemma}[theorem]{Lemma}
\newtheorem{proposition}[theorem]{Proposition}
\newtheorem{corollary}[theorem]{Corollary}
\newtheorem{definition}[theorem]{Definition}
\newtheorem{conjecture}[theorem]{Conjecture}
\title{The number of iterates of the Carmichael lambda function required to reach 1}
\author{Nick Harland}
\address{Department of Mathematics, University of British Columbia, Room 121, 1984 Mathematics Road, Vancouver, BC, V6T 1Z2, Canada}
\email{harlandn@math.ubc.ca}
\subjclass[2010]{11N56 (11N37)}
\keywords{arithmetic functions, normal order, iterated Carmichael function, Pratt trees, prime chains}
\begin{document}
\begin{abstract}
The Carmichael lambda function $\lambda(n)$ is defined to be the smallest positive integer $m$ such that $a^m \equiv 1 \pmod{n}$ for all $(a,n)=1.$ $\lambda_k(n)$ is defined to be the $k$th iterate of $\lambda(n).$ Let $L(n)$ be the smallest  $k$ for which $\lambda_k(n)=1.$ It's easy to show that $L(n) \ll \log n.$ It's conjectured that $L(n)\asymp \log\log n,$ but previously it was not known to be $o(\log n)$ for almost all $n.$ We will show that $L(n) \ll (\log n)^{\delta}$ for almost all $n,$ for some $\delta <1.$ We will also show $L(n) \gg \log\log n$ for almost all $n$  and conjecture a normal order for $L(n).$ 
\end{abstract}
\maketitle

\section{Introduction}\label{Intro}

The Carmichael lambda function $\lambda(n)$ is defined to be the exponent of the multiplicative group $(\Z / n\Z)^{\times}.$ It can be computed using the identity $\lambda(\lcm\{a,b\})=\lcm\{\lambda(a),\lambda(b)\}$ and its values at prime powers. Those value are $\lambda(p^{k})=\phi(p^{k})=p^k-p^{k-1}$ for odd primes $p,$ and $\lambda(2)=1,\lambda(4)=2,$ and $\lambda(2^{k})=\phi(2^{k})/2=2^{k-2}$ for $k \ge 3$. The $k$--fold iterated Carmichael lambda function is defined recursively as follows.
$$\lambda_0(n)=n,~~\lambda_k(n)=\lambda(\lambda_{k-1}(n)), \text{ for } k \ge 1.$$ This paper is about some analytical properties of a related function.
\begin{definition}
Let $L(n)$ be the smallest non-negative integer $k$ such that $\lambda_k(n)=1.$
\end{definition}
Since $\lambda(n)$ is either even or $1,$ and $\lambda(n) \le n/2 $ for even $n,$ we easily see that $L(n)\le \lfloor\log n/\log 2+1\rfloor.$ By considering when $n$ is a power of $3$ we can note that $L(n)\ge 1+ (1/\log 3)\log n$ for infinitely many values of $n.$ As for upper bounds, Martin and Pomerance \cite{MP} gave a construction for which $L(n) < (1/\log 2 +o(1))\log\log n$ for infinitely many $n.$ Probabilistically, these examples have asymptotic density $0.$ It is conjectured that for a set of positive integers with asymptotic density $1$, that $L(n)\asymp \log\log n,$ however no previous results have shown $L(n)=o(\log n)$ for almost all $n.$

The Pratt tree for a prime $p$ is defined as follows. Let the root node be $p.$ Below $p$ are nodes labelled with the primes $q$ such that $q \mid p-1.$ The nodes below $q$ are primes dividing $q-1$ and so on until we are left with just $2.$ For example, if we want to take the prime $3691,$ the primes dividing $3690$ are $2,3,5$ and $41.$ The primes dividing $3-1$ is $2,$ dividing $5-1$ is 2 and dividing $41-1$ are $2$ and $5.$ Continuing we obtain the tree

\Tree [.3691 2 [.3 2 ] [.5 2 ] [.41 2 [.5 2 ] ] ]

In a recent paper by Ford, Konyagin and Luca \cite{FKL}, they found bounds on the height of the Pratt tree $H(p).$ The height is closely related to $L(p)$ for a prime $p.$ It's easy to see that $H(p) \le L(p),$ so any lower bound on $H$ acts as a lower bound on $L.$ The Bombieri--Vinogradov Theorem implies 
\begin{equation}\label{BV}
\sum_{n\le Q}\max_{y\le x}\bigg|\pi(y;n,1)-\frac{\li(y)}{\phi(m)} \bigg| \ll x(\log x)^{-A}
\end{equation}
with $Q=x^{1/2}(\log x)^{-B}$ for any $A>0$ and $B=B(A).$ The Elliot--Halberstam conjecture says that \eqref{BV} holds for $Q=x^{\theta}$ for any $\theta<1.$ Let $\theta'$ be such that \eqref{BV} holds for $Q=x^{\theta'}.$ In \cite{FKL} they showed for any $c<1/(e^{-1}-\log \theta'),$
\begin{equation}\label{lower}
H(p)>c\log\log p
\end{equation} for all but $O\big( x/(\log x)^K\big)$ primes $p,$ for some $K>1.$ Bombieri--Vinogradov allows us to take any $c<1/(e^{-1}+\log 2),$ and under Elliot--Halberstam, we can take any $c<e.$

It's easy to see if $n=\prod p_i^{\alpha_i}$, then
\begin{equation}\label{primebreakdown}
L(n)=\max_i\{L(p_i^{\alpha_i})\}
\end{equation} and
\begin{equation}\label{powers}
L(p^{\alpha})=\alpha-1+L(p) \ge L(p).\end{equation} These two equations imply $L(n)\ge L(p)$ for any $p \mid n,$ motivating the following theorem.

\begin{theorem}\label{LowerBound} There exists some $c>0$ such that 
$$L(n) \ge c\log\log n$$ for all $n$ as $n \rightarrow \infty.$
\end{theorem}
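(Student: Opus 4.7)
The plan is to reduce the bound, via \eqref{primebreakdown}, to an application of \eqref{lower} at the largest prime factor $P(n)$. Since $L(n) \ge L(P(n)) \ge H(P(n))$, it will suffice to show that, for almost all $n$, $P(n)$ is \emph{Pratt-good} (i.e., lies outside the exceptional set in \eqref{lower}) and is large enough that $\log\log P(n) \sim \log\log n$.

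First I would choose a slowly growing parameter $y = y(x)$, say $y = x^{1/\log\log x}$. Standard smooth-number estimates then give $\Psi(x,y) = x\rho(\log\log x)(1+o(1)) = o(x)$, so almost all $n \le x$ satisfy $P(n) \ge y$, and for such $n$ one has $\log\log P(n) = (1-o(1))\log\log n$. I would then let $B$ denote the set of \emph{Pratt-bad} primes $p$ with $H(p) < c_0 \log\log p$, where $c_0$ is the constant from \eqref{lower}, so that $|B \cap [1,t]| \ll t/(\log t)^K$ for some $K>1$. Using the bound $|\{n \le x : P(n)=p\}| \le x/p$ together with partial summation against this density estimate on $B$, I would deduce
\[
|\{n \le x : P(n) \in B,\ P(n) \ge y\}| \le x\sum_{p\in B,\ p\ge y}\frac{1}{p} \ll x(\log y)^{1-K} = o(x).
\]
Combining the two estimates would give that, for almost all $n\le x$, the prime $P(n)$ lies in $[y,x]\setminus B$, and hence
\[
L(n) \ge L(P(n)) \ge H(P(n)) \ge c_0 \log\log P(n) \ge (c_0 - o(1))\log\log n,
\]
which yields the theorem for any $c<c_0$.

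The main obstacle is the partial-summation step: one needs the strict inequality $K > 1$ in the density bound on $B$, since for $K \le 1$ the sum $\sum_{p \in B,\ p \ge y} 1/p$ would only be $O(1)$ and the Pratt-bad contribution could be a positive proportion of integers. The fact that $K > 1$ is precisely what \eqref{lower} extracts from Bombieri--Vinogradov (as proven in \cite{FKL}), and is the deep analytic input underpinning the proof.
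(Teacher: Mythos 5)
Your proposal is correct and follows essentially the same route as the paper: discard the $y$-smooth integers via a Dickman-function estimate, discard the integers divisible by a large Pratt-bad prime via $\sum_{p\in B,\,p\ge y}1/p \ll (\log y)^{1-K}$ with $K>1$, and apply \eqref{lower} through $L(n)\ge L(p)\ge H(p)$ at a remaining large good prime factor. The only cosmetic difference is that you anchor the argument at the largest prime factor $P(n)$, while the paper uses an arbitrary prime factor exceeding $Y$; you also correctly identify $K>1$ as the essential analytic input.
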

For an upper bound, from \cite{FKL} we have 
\begin{equation}\label{upper}
H(p) \le (\log p)^{0.95022} 
\end{equation} for all $p\le x$ outside a set of size $O\big(x\exp({-}(\log x)^\delta)$ for some $\delta>0.$ We extend this to a result about $L(n).$

\begin{theorem}\label{UpperBound}
If $H(p) \le (\log p)^{\gamma}$ for almost all $p\le x$ outside a set of size $O\big(x\exp({-}(\log x)^\delta\big)$ for some $\delta>0,$ then for some function $\psi$ and any $\epsilon>0,$ $$L(n) \ll (\log n)^{\gamma}\psi(n)$$ for almost all $n$ as $n \rightarrow \infty.$
\end{theorem}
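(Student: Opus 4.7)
The idea is to lift the almost-all bound on $H(p)$ to an almost-all bound on $L(n)$ by combining the decomposition \eqref{primebreakdown}--\eqref{powers} with a density sieve that prevents exceptional primes from appearing in the factorization of typical $n$. Fix a slowly growing function $\psi(x)\to\infty$ (say, $\psi(x)=(\log\log x)^{2}$) and set $Y=Y(x)=2^{(\log x)^{\gamma}\psi(x)}$.

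\textbf{Step 1 (exponents and small primes).} From \eqref{primebreakdown}--\eqref{powers} one has $L(n)\le A(n)+\max_{p\mid n}L(p)$, where $A(n):=\max_{p^{\alpha}\mid n}\alpha$. For $A(x)=10\log\log x$, the number of $n\le x$ with some $v_{p}(n)>A(x)$ is $\ll x\sum_{p}p^{-(A(x)+1)}=o(x)$, so the $A(n)$ term is negligible for almost all $n$. For $p<Y$, the trivial bound $L(p)\le\log p/\log 2<(\log x)^{\gamma}\psi(x)$ already suffices, so only large prime divisors $p\ge Y$ of $n$ require further attention.

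\textbf{Step 2 (from $H(p)$ to $L(p)$).} Unrolling $L(p)=1+\max_{q^{\beta}\|p-1}(\beta-1+L(q))$ along Pratt chains $p=q_{0}\to q_{1}\to\cdots\to q_{k}=2$ (with $q_{i+1}\mid q_{i}-1$) gives
\[
L(p)=\max_{\text{chain}}\sum_{i=0}^{k-1}v_{q_{i+1}}(q_{i}-1)\ge H(p)=\max_{\text{chain}}k,
\]
so the excess $L(p)-H(p)$ is the total multiplicity $v-1$ accumulated along the optimal $L$-chain. Since for a random prime $r$ the event $v_{q}(r-1)\ge 2$ has density $\ll 1/q(q-1)$, iterating Bombieri--Vinogradov \eqref{BV} down the Pratt tree (cf.\ \cite{FKL}) shows that the set of $p\le x$ with total excess at least $(\log p)^{\epsilon}$ has size $\ll x\exp(-(\log x)^{\delta'})$ for any fixed $\epsilon>0$. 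Combined with the hypothesis, the set $E:=\{p:L(p)>(\log p)^{\gamma}\psi(p)\}$ satisfies $|E\cap[1,y]|\ll y\exp(-(\log y)^{\delta''})$ for some $\delta''>0$.

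\textbf{Step 3 (density sieve).} Dyadic partial summation against the bound on $|E\cap[1,y]|$ yields
\[
\sum_{p\in E,\,p\ge Y}\frac{1}{p}\ll\int_{\log Y}^{\infty}e^{-u^{\delta''}}\,du\ll(\log Y)^{1-\delta''}e^{-(\log Y)^{\delta''}}=o(1),
\]
since $\log Y=(\log 2)(\log x)^{\gamma}\psi(x)\to\infty$. Hence the number of $n\le x$ divisible by some $p\in E\cap[Y,x]$ is $o(x)$. For $n$ outside this exceptional set (and with every exponent $\le A(x)$, also an almost-all condition), Steps 1--2 combine to give $L(n)\ll(\log x)^{\gamma}\psi(x)\ll(\log n)^{\gamma}\psi(n)$, using $\log n\asymp\log x$ on a set of density $1$.

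\textbf{Main obstacle.} Step 2 is the crux: one must control $L(p)-H(p)$ uniformly across a tree of depth $\approx(\log p)^{\gamma}$, with the sieve loss per level remaining $(\log x)^{o(1)}$ so that the overall exceptional set keeps the shape $x\exp(-(\log x)^{\delta''})$. The delicate point is the small-prime edges in the chain, where $v_{q}(r-1)$ can genuinely be large and Bombieri--Vinogradov in its raw form may not be strong enough; some auxiliary argument handling the $q=2$ (and more generally small $q$) branches separately will likely be needed.
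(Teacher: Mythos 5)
Your Steps 1 and 3 are sound and essentially match the paper's framework: the reduction $L(n)\le A(n)+\max_{p\mid n}L(p)$ with the exponent count, and the partial-summation sieve $\sum_{p\in E,\,p\ge Y}1/p=o(1)$ against a sparse exceptional set, are both there (your trick of taking $Y=2^{(\log x)^{\gamma}\psi(x)}$ so that the trivial bound $L(p)\le\log p/\log 2$ handles all small primes is in fact a tidy alternative to the paper's appeal to smooth-number estimates). But Step 2 is a genuine gap, and it is the entire content of the theorem. You assert that ``iterating Bombieri--Vinogradov down the Pratt tree'' shows the set of $p\le x$ with excess $L(p)-H(p)\ge(\log p)^{\epsilon}$ has size $\ll x\exp(-(\log x)^{\delta'})$. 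No such statement is proved in the paper, and your own closing paragraph concedes you do not know how to prove it: the tree has up to $(\log p)^{\gamma}$ levels, the maximum is over exponentially many branches, and the small-modulus edges where $v_{q}(r-1)$ is genuinely large are exactly where a per-level union bound with loss $(\log x)^{o(1)}$ breaks down. A heuristic density $\ll 1/q(q-1)$ for the event $v_{q}(r-1)\ge 2$ is not a proof, and Bombieri--Vinogradov (an equidistribution statement for moduli up to $x^{1/2}$) is not the right tool here in any case.

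The paper closes this gap with two ideas you are missing. First, a purely combinatorial constraint: along any branch $2=q_{k}\prec\cdots\prec q_{0}=p$ with $q_{i}^{\alpha_{i}}\|q_{i-1}-1$, one has $q_{i}^{\alpha_{i}}<q_{i-1}$, hence $2^{\prod_{i}\alpha_{i}}<p$, i.e. $\prod_{i}\alpha_{i}<\log p/\log 2$. Maximizing $\sum_{i}(\alpha_{i}-1)$ subject to this product constraint and $k\le(\log x)^{\gamma}$ shows that either the total excess is $o(\psi(x)(\log x)^{\gamma})$ or some \emph{single} exponent satisfies $\alpha_{i}\ge b\psi(x)(\log x)^{\gamma}$. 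Second, Lemma \ref{cases}: an iterated Brun--Titchmarsh bound showing that the number of $n\le x$ admitting a chain $q^{\alpha}\mid q_{k-1}-1,\dots,q_{1}\mid p-1,\ p\mid n$ is at most $x(cy)^{k}/q^{\alpha}$. Summing this over $\alpha\ge b\psi(x)(\log x)^{\gamma}$, $k\le(\log x)^{\gamma}$, and $q$ gives $o(x)$ once $\psi(x)\gg_{b}\log_{3}x$ --- which is why $\psi$ appears in the statement at all. Note that this counts exceptional $n$ directly rather than exceptional $p$, needs only an upper-bound sieve rather than equidistribution, and never requires the strong $x\exp(-(\log x)^{\delta'})$ bound you posited. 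Without these two ingredients your argument does not go through.
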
The function $\psi(x)$ can be taken to be as small as $O(\log\log\log x).$ Using Theorem \ref{UpperBound} along with equation \eqref{upper} yields the following corollary.
\begin{corollary}
For any $\epsilon>0$, for almost all $n$, $$L(n) \ll (\log n)^{0.9503}.$$
\end{corollary}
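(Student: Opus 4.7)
The plan is to apply Theorem \ref{UpperBound} directly, using the hypothesis supplied by equation \eqref{upper} from \cite{FKL}. That equation says exactly that $H(p) \le (\log p)^{0.95022}$ for all $p \le x$ outside an exceptional set of size $O\bigl(x\exp(-(\log x)^\delta)\bigr)$ for some $\delta>0$. This is precisely the hypothesis of Theorem \ref{UpperBound} with the choice $\gamma = 0.95022$.

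Feeding this into the conclusion of the theorem, I obtain
\[
L(n) \ll (\log n)^{0.95022}\,\psi(n)
\]
for almost all $n$, where $\psi$ is the auxiliary function from the theorem. Invoking the remark that $\psi$ may be taken as small as $O(\log\log\log n)$, the bound becomes
\[
L(n) \ll (\log n)^{0.95022}\,\log\log\log n
\]
for almost all $n$.

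Finally I would absorb the triple-logarithm into a tiny power of $\log n$: since $\log\log\log n = O\bigl((\log n)^{\eta}\bigr)$ for every $\eta>0$, I choose any $\eta$ with $0.95022 + \eta < 0.9503$ (for instance $\eta = 10^{-4}$), giving $L(n) \ll (\log n)^{0.9503}$ for almost all $n$. No step here poses a real obstacle; the corollary is essentially just a numerical packaging of Theorem \ref{UpperBound} combined with \eqref{upper}, and the only thing to check is the trivial inequality $0.95022 + \eta < 0.9503$ for suitably small $\eta$.
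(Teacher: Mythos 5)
Your proposal is correct and matches the paper's (implicit) argument exactly: the corollary is stated as an immediate consequence of Theorem \ref{UpperBound} applied with $\gamma=0.95022$ from \eqref{upper}, with the factor $\psi(n)\ll\log\log\log n$ absorbed into the gap between the exponents $0.95022$ and $0.9503$. Nothing further is needed.
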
 

In \cite{FKL}, the authors described a probabilistic model which suggested a conjecture on the normal order for $H(p)$ is $e\log\log p.$ Assuming this conjecture, we give some evidence to suggest a related conjecture for $L(n).$

\begin{conjecture}\label{conj1}
The normal order of $L(n)$ is $e\log\log n.$
\end{conjecture}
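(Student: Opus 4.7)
The plan is to derive Conjecture \ref{conj1} from the Ford--Konyagin--Luca conjecture that $H(p)$ has normal order $e\log\log p,$ combined with an analogous statement for $L(p),$ by exploiting the identities $L(n) = \max_i L(p_i^{\alpha_i})$ and $L(p^{\alpha}) = \alpha - 1 + L(p).$ The argument naturally splits into matching lower and upper bounds of $(e+o(1))\log\log n.$

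For the lower bound, I would select a single large prime divisor of $n.$ Since for almost all $n$ the largest prime factor $P^+(n)$ satisfies $P^+(n) > (\log n)^{C}$ for any fixed $C,$ one obtains $\log\log P^+(n) = (1+o(1))\log\log n.$ Combining $L(n) \geq L(P^+(n)) \geq H(P^+(n))$ with the FKL normal order for $H$ yields $L(n) \geq (e - o(1))\log\log n$ for almost all $n,$ provided the exceptional set of primes in the FKL conjecture is thin enough that $P^+(n)$ typically avoids it. This direction uses only the inequality $L(p) \geq H(p)$ and does not require analyzing the gap.

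For the upper bound, one needs to control $L(p)$ from above by roughly $e\log\log p$ for almost all primes, which is a refinement of the FKL statement. The discrepancy $L(p)-H(p)$ reflects the accumulation of repeated prime factors along the $\lambda$-iteration chain starting from $p,$ and under the same probabilistic model underlying the FKL conjecture, one expects this gap to be $o(\log\log p)$ typically. Granting this, and noting that for almost all $n$ the total contribution from prime-power parts $p^{\alpha} \parallel n$ with $\alpha \geq 2$ is $o(\log\log n)$ (since such exponents are bounded for large $p$ and the small primes contribute only $O(\log\log\log n)$), we have $L(n) = \max_i L(p_i) + o(\log\log n).$ Taking the maximum over $(1+o(1))\log\log n$ prime factors, each bounded by $(e+o(1))\log\log p_i \leq (e+o(1))\log\log n,$ preserves the desired bound.

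The main obstacle lies in the upper bound, specifically in guaranteeing that the exceptional set where $L(p)$ substantially exceeds $e\log\log p$ has density $o(1/\log\log x),$ so that a union bound over the $\log\log n$ prime factors of a typical $n$ excludes only a set of density zero. This requires upgrading the Ford--Konyagin--Luca model from a statement about the normal order to a quantitative concentration statement, and also demands a careful accounting of how prime-power multiplicities propagate through the $\lambda$-iteration. Both are substantial probabilistic hurdles beyond what is currently known about Pratt trees, which is presumably why only a conjecture, rather than a conditional theorem, is offered.
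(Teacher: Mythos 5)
Your overall architecture (lower bound via a large prime factor of $n$ plus the FKL conjecture; upper bound via controlling $L(p)-H(p)$ and the prime-power parts of $n$; transfer between $p$ and $n$ by excluding $n$ divisible by exceptional primes) matches the paper's heuristic, and your treatment of $p^{\alpha}\parallel n$ is in line with the paper's Lemma \ref{Prob}. Two problems remain, one minor and one central. The minor one: in the lower bound you invoke $P^+(n)>(\log n)^C$, but this only gives $\log\log P^+(n)\approx \log\log\log n$, not $(1+o(1))\log\log n$. You need the much stronger (but still standard) smooth-number estimate: the paper takes $\log Y=(\log x)^{1-\epsilon}$ and uses $\Psi(x,Y)\ll x\rho(u)=o(x)$ to guarantee $P^+(n)\ge \exp((\log n)^{1-\epsilon})$ for almost all $n$, whence $\log\log P^+(n)\ge(1-\epsilon)\log\log n$.

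The central gap is in the upper bound, where you assert that ``under the same probabilistic model underlying the FKL conjecture, one expects $L(p)-H(p)$ to be $o(\log\log p)$ typically.'' Justifying that is the entire content of the paper's Section 4, and it is emphatically not automatic. Along a branch of the Pratt tree reaching depth $l\approx c\log_2 p$, the Bassily--K\'atai--Wijsmuller count gives about $N=(\log p)^{c\log(e/c)}$ primes at that level, and the model then predicts a largest prime power $q^{a}\mid p_l-1$ with $a\approx\frac{c\log(e/c)}{\log q}\log_2 p$ --- a \emph{positive proportion} of $\log_2 p$, not $o(\log_2 p)$. The conjecture survives only because of the arithmetic fact that $c+\frac{c\log(e/c)}{\log q}\le e$ for all $0<c<e$ when $q\ge 3$, together with $\lambda(2^a)=2^{a-2}$ effectively replacing $\log 2$ by $\log 4$ in the denominator when $q=2$. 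The paper makes a point of how delicate this is: if instead $\lambda(2^a)$ were $2^{a-1}$, the relevant function $c+\frac{c\log(e/c)}{\log 2}$ attains the value $2/\log 2>e$ at $c=2$, and the same model would predict a normal order \emph{strictly larger} than $e\log\log n$. So ``the gap is typically negligible'' fails branch-by-branch and becomes true only after maximizing the combined quantity over the depth parameter $c$; that optimization is the one step that genuinely distinguishes $L$ from $H$, and it is missing from your proposal.
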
 
Throughout the paper, $p$ and $q$ will always denote primes, $\log_k(n)$ will denote the $k$th iterate of the $\log$ function, and $y=y(x)=\log_2(x)=\log\log x.$ Also the notation $q \prec q'$ is defined to mean  $q \mid q'-1.$

\section{Lower bound for $L(n)$}
For any $p \mid n$, we know that $L(n) \ge L(p),$ which implies that $L(n)>c\log_2(p).$ However, if all the primes $p$ dividing $n$ are small relative to $n,$ this will not imply that $L(n)>c\log_2(n).$ The proof of Theorem \ref{LowerBound} therefore relies on showing that not many $n$ are composed entirely of small primes as well as dealing with the exceptional set for which \eqref{lower} doesn't hold.

\begin{proof}[Proof of Theorem \ref{LowerBound}]

Let $Y=Y(x) \le x.$ Let $c$ from Equation \eqref{lower} which can be shown to be any constant $c < 1/(e^{-1}+\log 2).$ Define a set $S(x)=S(x,Y)=\{p : p \ge Y, H(p) < c\log_2(p)  \}.$ We have that $\# S(x) \ll x/(\log x)^K$ for some $K>1,$ so if $p \mid n$ for some $p \notin S(x),$ 

\begin{align*}
L(n) \ge L(p) \ge c\log_2(p).
\end{align*}
If $n$ is only composed of $p \in S(x)$, then either there exists $p \ge Y, p\in S(x)$ such that $p \mid n$ or $n$ is composed entirely of primes less than or equal to $Y.$ The number of $n \le x$ where there exists $p \mid n$ with $p \in S(x)$ is bounded by

\begin{align*}
\sum_{n\le x}\sum_{\substack{ p \mid n \\ p \in S(x)}} 1 &= \sum_{\substack{ p \le x \\ p \in S(x)}}\sum_{\substack{ n \le x \\ n \equiv 0 \pmod{p}}}1 \\ & \le  \sum_{\substack{ p \le x \\ p \in S(x)}}\frac{x}{p}
\\ & = x\bigg(\frac{\abs{S(x)}}{x} + \int_{Y}^{x}\frac{S(t)dt}{t^2}  \bigg)
\\ & \ll \frac{x}{\log^K x} + \int_{Y}^{x} \frac{dt}{t\log ^K t}
\\ & \ll \frac{x}{\log^{K-1}Y}
\end{align*} using partial summation. Let $\Psi(x,z)$ be the number of $n \le x$ composed of primes $p \le z$ and let $z=x^{1/u}.$  By \cite[Theorem 7.2]{MV}, $$\Psi(x,z) \ll x\rho(u)$$ where $\rho(u)$ is the Dickman function. It's known that $\rho(u)\rightarrow 0$ as $u \rightarrow \infty.$ Given $\epsilon > 0,$ choose $Y$ such that $\log Y = (\log x)^{1-\epsilon}.$ Since $Y<x^{\gamma}$ for all $\gamma>0,$ this choice yields
$L(n)\ge c(1-\epsilon)\log_2(x)$ for all but $O\big(x/(\log Y)^{K-1} + \Psi(x,Y)\big)=o(x)$ such $n.$ This completes the theorem.

\end{proof}

It's worth noting that under the Elliot--Halberstam conjecture, that constant can be replaced by any $c < e.$

\section{Upper Bound for $L(n)$}

The Pratt tree for a prime $p$ describes the primes $q$ where $q \prec \dots \prec p.$ This is useful in calculating $L(p),$ however $L(p)$ is also increased by prime powers for which the Pratt tree does not describe. The proof of Theorem \ref{UpperBound} hinges on bounding the contribution of these large prime powers. We begin with the following lemma.

\begin{lemma}\label{cases}
Fix a prime $q$ and positive integers $k,\alpha.$ The number of $n \le x$ such that there exists $q^{\alpha} \mid q_{k-1}-1,q_{k-1} \mid q_{k-2}-1,\dots,q_1 \mid p-1$ and $p \mid n$ is at most $$\frac{x(cy)^k}{q^{\alpha}}$$ for some absolute constant $c.$
\end{lemma}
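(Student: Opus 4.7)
The plan is to dominate the count of $n$ by $\sum_{\text{chains}} x/p$: for each fixed prime $p$ at the top of some valid chain, the number of $n \le x$ with $p\mid n$ is $\le x/p$. Since $q_1\mid p-1$, $q_2\mid q_1-1$, and so on, every prime appearing in the chain is automatically less than $x$, so all sums that follow are over primes $\le x$.

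The workhorse is the uniform Brun--Titchmarsh-type bound
\begin{equation*}
\sum_{\substack{r \le x \\ r \equiv 1 \pmod{m}}} \frac{1}{r} \ll \frac{\log\log x}{\phi(m)} \ll \frac{y}{m}
\end{equation*}
for primes $r$, valid for any modulus $m\le x$, which comes from $\pi(x;m,1)\ll x/(\phi(m)\log(x/m))$ by partial summation (splitting at $t=2m$, where below $2m$ there is $O(1)$ such prime). I would apply this estimate first to the innermost sum over $p$ with $q_1\mid p-1$, obtaining $\ll y/q_1$, then to the sum over $q_1$ with $q_2\mid q_1-1$, producing $\ll y^2/q_2$, and so on inductively. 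After summing out $p,q_1,\ldots,q_{k-2}$ we are left with $\ll y^{k-1}/q_{k-1}$, and a final application to the sum over $q_{k-1}$ with $q^\alpha\mid q_{k-1}-1$ contributes $\ll y/\phi(q^\alpha)\ll y/q^\alpha$.

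Multiplying by the initial factor of $x$, the chain of estimates collapses to
\begin{equation*}
\sum_{\text{chains}} \frac{x}{p} \;\ll\; \frac{x\,y^k}{q^\alpha},
\end{equation*}
and since each of the $k$ summation steps introduces the same absolute Brun--Titchmarsh constant, these constants combine multiplicatively to give a factor of the form $c^k$. Absorbing them into the $y^k$ factor produces the stated bound $x(cy)^k/q^\alpha$.

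The only real obstacle is confirming that the $\log\log x/\phi(m)$ bound holds with an absolute constant uniformly across all moduli $m\le x$ that arise (in particular for $m=q^\alpha$, where we need $\phi(q^\alpha)\gg q^\alpha$, which is clear since $\phi(q^\alpha)=q^{\alpha-1}(q-1)\ge q^\alpha/2$). Once this is secured, the rest of the argument is a straightforward iteration.
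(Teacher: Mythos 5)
Your proposal is correct and follows essentially the same route as the paper: Brun--Titchmarsh plus partial summation to get $\sum_{r\le x,\, r\equiv 1 \pmod m} 1/r \ll y/\phi(m) \ll y/m$ for prime-power moduli, then iterating this bound up the chain from the innermost sum $\sum_{n\equiv 0 \pmod p} 1 \le x/p$ out to the final sum over $q_{k-1}\equiv 1 \pmod{q^\alpha}$, absorbing the $k$ Brun--Titchmarsh constants into $(cy)^k$.
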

\begin{proof}
We'll make use out of the Brun--Titchmarsh inequality
$$\pi(t;m,a)\le \frac{2t}{\phi(m)\log(t/m)}.$$ Partial summation yields
\begin{equation}\label{BT}
\sum_{\substack{p \le x \\ p \equiv 1 \pmod{m}}}\frac{1}{p} \ll \frac{\log_2 x}{\phi(m)}.
\end{equation} Noting that $\phi(m) \gg m$ if $m$ is a prime or prime power implies
\begin{equation}\label{BT2}
\sum_{\substack{p \le x \\ p \equiv 1 \pmod{m}}}\frac{1}{p} \le \frac{c\log_2 x}{m} = \frac{cy}{m}
\end{equation} if $m$ is a prime or prime power. Repeated uses of \eqref{BT2} gives us the number of such $n$ is bounded by
\begin{align*}
\sum_{n \le x}\sum_{p \mid n}& \sum_{q_1 \mid p-1}\dots \sum_{q_{k-1} \mid q_{k-2}-1}\sum_{q^{\alpha} \mid q_{k-1}-1}1 \\ & = \sum_{q_{k-1}\equiv 1\pmod{q^{\alpha}}}\sum_{q_{k-2}\equiv 1\pmod{q_{k-1}}}\dots\sum_{p \equiv 1 \pmod{q_1}}\sum_{\substack{n \le x \\ n \equiv 0 \pmod{p}}}1 \\
& \le \sum_{q_{k-1}\equiv 1\pmod{q^{\alpha}}}\sum_{q_{k-2}\equiv 1\pmod{q_{k-1}}}\dots\sum_{p \equiv 1 \pmod{q_1}}\frac{x}{p} \\
& \le \sum_{q_{k-1}\equiv 1\pmod{q^{\alpha}}}\sum_{q_{k-2}\equiv 1\pmod{q_{k-1}}}\dots\sum_{q_1 \equiv 1 \pmod{q_2}}\frac{xcy}{q_1} \\
& \le \sum_{q_{k-1}\equiv 1\pmod{q^{\alpha}}}\frac{x(cy)^{k-1}}{q_{k-1}} \\
& \le \frac{x(cy)^{k}}{q^{\alpha}}.
\end{align*}
\end{proof}

We will show Theorem \ref{UpperBound} is a corollary to the main propostion, that the difference between $H(p)$ and $L(p)$ cannot be too great.

\begin{proposition}\label{Prop}Let $b>0$ and $c$ be the constant from \eqref{BT}. Suppose $H(p) \le (\log p)^{\gamma}$ for all $p\le x$ outside a set of size $O\big(x\exp({-}(\log x)^\delta\big)$ and let $\psi(x)$ be a function such that
\begin{equation}
\frac{x(cy)^{(\log x)^\gamma+1}}{2^{b(\log x)^\gamma\psi(x)-2}}=o(x).
\end{equation} Then
$$L(n)\ll (\log x)^\gamma\psi(x)$$ for almost all $n \le x,$ for which the excluded $n$ are divisible by at least one prime $p$ in the above excluded set.
\end{proposition}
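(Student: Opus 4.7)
The plan is to count the exceptional $n \le x$ with $L(n) > CM$, where $M := (\log x)^\gamma \psi(x)$ and $C = C(b)$ is a constant to be chosen, and show this count is $o(x)$. I would split according to whether $n$ has a prime factor in $E := \{p \le x : H(p) > (\log p)^\gamma\}$. Partial summation against $\abs{E \cap [2,t]} \ll t\exp(-(\log t)^\delta)$ gives $\sum_{p \in E} 1/p = o(1)$, so the $n$ with at least one prime factor in $E$ number $o(x)$; these are the excluded $n$ in the conclusion.

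For the remaining $n$, I would unfold the definition of $L$ to extract a long chain. The identities $L(m) = \max_{r^\beta \| m}(\beta - 1 + L(r))$ and $L(p) = 1 + L(p-1)$, iterated, imply that any $n$ with $L(n) > CM$ furnishes a prime $p$ with $p^{\alpha_0}\|n$, a sequence $p = q_0, q_1, \ldots, q_K = 2$ with $q_i \mid q_{i-1}-1$, and multiplicities $\beta_i$ with $q_i^{\beta_i} \| q_{i-1}-1$ such that $S := \alpha_0 + \sum_{i=1}^K \beta_i \ge CM + 1$. If $n$ has no prime factor in $E$, then the chain length satisfies $K \le H(p) \le (\log x)^\gamma =: K_0$. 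For a fixed chain and profile the count of $n \le x$ with $p^{\alpha_0} \mid n$ is $\le x/p^{\alpha_0}$; summing the primes outward by combining \eqref{BT2} with the crude inequality $1/r^\beta \le 2^{-(\beta-1)}/r$ at every layer produces a factor $cy$ per intermediate sum (from Brun--Titchmarsh) and a factor $2^{-(\beta_i - 1)}$ from each multiplicity, plus $2^{-(\alpha_0 - 1)}$ from the $n$-layer and $2^{-\beta_K}$ from the terminal $q_K = 2$. The resulting per-profile bound is $x(cy)^K/2^{S - K}$.

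Summing over multiplicity profiles with $S \ge CM + 1$ is a geometric-type series in $S$, weighted by the composition count $\binom{S-1}{K} \le (eS/K)^K$ and dominated by the minimal value $S = CM + 1$; summing then over $K \le K_0$ is dominated in turn by $K = K_0$. The resulting total is $\ll x(2eC \cdot cy \cdot \psi(x))^{K_0}/2^{CM}$. Taking $C > b$, the logarithm of the ratio of this bound to $x(cy)^{K_0 + 1}/2^{bM-2}$ is $K_0 \log(2eC\psi) - \log(cy) - (C - b)K_0\psi\log 2$, which tends to $-\infty$ since the hypothesis forces $\psi \to \infty$. Hence the exceptional count is $o(x(cy)^{K_0+1}/2^{bM-2}) = o(x)$. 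The principal technical difficulty is propagating the $2^{-(\beta_i - 1)}$ saving at every intermediate layer of the chain rather than only at the terminal prime (as in Lemma \ref{cases}), and absorbing the combinatorial overhead $\binom{S-1}{K}$ from summing over compositions by taking $C$ large enough relative to $b$.
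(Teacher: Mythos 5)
Your argument is correct, but it takes a genuinely different route from the paper's. The paper first disposes of large prime powers $p^{\alpha}\mid n$ separately, then observes that along a branch $2=q_k\prec\cdots\prec q_0=p$ with $q_i^{\alpha_i}\|q_{i-1}-1$ one has $\prod_i\alpha_i<\log x/\log 2$, and uses an extremal argument (a sum with fixed product and bounded terms is maximized by pushing terms to the extremes) to conclude that the excess $\sum_i(\alpha_i-1)$ can only reach $\psi(x)(\log x)^{\gamma}$ if a \emph{single} exponent satisfies $\alpha_i\ge b\psi(x)(\log x)^{\gamma}$; it then needs only Lemma \ref{cases}, which tracks one prime power per chain, and the resulting sum reproduces the hypothesized expression exactly. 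You instead keep the full multiplicity profile, extract a saving $2^{-(\beta_i-1)}$ at every layer via $1/r^{\beta}\le 2^{-(\beta-1)}/r$ (a strengthening of Lemma \ref{cases} that its proof readily supports), and pay for it with the composition count $\binom{S-1}{K}$, absorbed by taking $C>b$; your comparison of $(2eC\cdot cy\cdot\psi)^{K_0}2^{-CK_0\psi}$ against the hypothesized quantity is sound because the hypothesis itself forces $\psi\gg\log(cy)\to\infty$. Your route avoids both the separate treatment of $p^{\alpha_0}\|n$ and the extremal lemma, at the cost of combinatorial bookkeeping over profiles; the paper's route keeps every count an instance of a single clean lemma.

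One inaccuracy, immaterial here but worth flagging: partial summation against $\abs{E\cap[2,t]}\ll t\exp(-(\log t)^{\delta})$ gives $\sum_{p\in E}1/p=O(1)$, not $o(1)$, since $\int_2^x\exp(-(\log t)^{\delta})\,dt/t$ tends to the positive constant $\int_{\log 2}^{\infty}e^{-u^{\delta}}\,du$. So the hypothesis alone does not show that the $n$ with a prime factor in $E$ number $o(x)$. This costs nothing for Proposition \ref{Prop}, whose conclusion explicitly sets those $n$ aside; but it is precisely the issue addressed in the proof of Theorem \ref{UpperBound}, where the paper restricts the exceptional primes to $p>Y$ (making the integral $\ll 1/\log Y$) and handles $n$ composed entirely of primes at most $Y$ via the smooth-number estimate $\Psi(x,Y)\ll x\rho(u)$.
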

Note that if $\psi'(x)$ is some function such that $b\psi'(\log x)^{\gamma}-\log(cy)\rightarrow \infty$ and $\psi(x)>\frac{1}{b\log 2}\log(cy)+\psi'(x),$ then 
\begin{align*}\frac{x(cy)^{(\log x)^\gamma+1}}{2^{b(\log x)^\gamma\psi(x)-2}} & = \frac{x\exp\big(((\log x)^{\gamma}+1)\log(cy)\big)}{\exp\big((b\psi(x)(\log x)^\gamma-2)\log 2\big)}\ll x\exp\bigg(\log(cy)-b\psi'(x)(\log x)^{\gamma}  \bigg)=o(x).
\end{align*} Specifically we can choose $\psi(x)\ll_b \log_3(x).$ The proof of Propostion \ref{Prop} begins by analyzing the ways that $L(p)$ can be much larger than $H(p)$ and then showing in those cases that it cannot happen for many $p.$ 

\begin{proof}[Proof of Proposition \ref{Prop}]
Let $n=\prod p_i^{\alpha_i}$ be the prime factorization of $n$ where $p\mid n \rightarrow H(p) \le (\log p)^{\gamma}.$ By equations \eqref{primebreakdown} and \eqref{powers}, $L(n)=\max_i\{\alpha_i-1+L(p)\}$. Our first goal is to show that the number of $n$ for which there exists a large $\alpha$ with $p^\alpha \mid n$ is small. Fixing a prime $p$, the number $n \le x$ such that $p^\alpha \mid n$ is at most $x/p^\alpha.$ Hence the number of bad $n$ is bounded by
\begin{align*}\sum_{p\le x}\frac{x}{p^\alpha} &\le x\sum_{m=1}^x\frac{1}{p^\alpha} \\
& \ll \frac{x}{\alpha}.
\end{align*} Applying this with any $\alpha= \xi(x)$ with $\xi(x) \rightarrow \infty$ makes the number of such $n$ be $o(x).$ Therefore for almost all $n \le x$ we can assume $$L(n) \le \max_{p \mid n}(L(p)+\xi(x)) = \max_{p \mid n}(L(p)+o((\log x)^\gamma)$$ by taking $\xi(x)=o((\log x)^\gamma).$

Let $\psi(x)$ be a function satifying the hypothesis of the proposition. We must determine how $L(p)$ can be larger than $H(p)$ and by how much. First note that for any prime in the Pratt tree, the difference between the factors of $q-1$ and the primes in the Pratt tree are just the powers of that prime which divide $q-1.$ Therefore, if we have a branch of the Pratt tree, $2=q_k \prec q_{k-1} \prec \dots \prec q_1 \prec q_0 = p,$ then $L(p)\le \max\{H(p) + \sum_{i=1}^{k}(\alpha_i-1)\}$ where $q_i^{\alpha_i} \| q_{i-1}-1$ and the $\max$ is taken over all the branches of the Pratt tree. The inequality $q_i^{\alpha_i} < q_{i-1}$ holds for all $i$ which implies $$2^{\prod_{i=1}^k \alpha_i} < p.$$ Therefore we need to maximize the sum $\sum_{i=1}^{k}(\alpha_i-1)$ subject to $\prod_{i=1}^k \alpha_i < \log x/\log 2.$ 

Suppose we have $rs=tu,$ where $2 \le r,s,t,u \le M.$ The larger of $r+s$ and $t+u$ will be where the two terms are further apart. Consequently if we wish to maximize a sum subject a fixed product and number of terms, we want some terms to be the lowest possible value, in this case 2, and the rest to be the largest value, in this case $M.$ Suppose $\sum_{i=1}^{k}(\alpha_i-1) \gg \psi(x)(\log x)^\gamma,$ where $2 \le \alpha \le M$ and $M=o\big(\psi(x)(\log x)^\gamma\big).$ By the above reasoning we know the sum is bounded by $2(k-l)+lM$ for some $l\le k.$ However, $M^l \le \log x/\log 2$ implying $l \le (\log_2 x-\log_2 2)/\log M.$ Since $k \ll \log_2(x),$ $2(k-l)+lM$ is bounded above by 
$$O\bigg(\log_2(x)+M(\log_2 x-\log_2 2)/\log M \bigg)=o\big(\psi(x)(\log x)^\gamma\big),$$ contradicting the bound on $M.$ As a result, we know there exists some $\alpha_i\ge  b\psi(x)(\log x)^\gamma$ for some $b>0.$

It remains to show that the number of $n \le x$ such that there exists $q^{\alpha} \mid q_{k-1}-1,q_{k-1} \mid q_{k-2}-1,\dots,q_1 \mid p-1,p \mid n$, with $\alpha \ge b\psi(x)(\log x)^\gamma$ is $o(x).$ Note that $k \le H(p) \le (\log x)^\gamma.$ By Lemma \ref{cases}, the number of $n$ is bounded by

\begin{align*}
\sum_{\alpha \ge b\psi(x)(\log x)^\gamma}\sum_{k \le (\log x)^\gamma}\sum_q\frac{x(cy)^k}{q^\alpha}.
\end{align*} Summing q over all integers at least 2 instead of primes and using $\alpha \ge 2$ makes this
\begin{align*}
\ll \sum_{\alpha \ge b\psi(x)(\log x)^\gamma}\sum_{k \le (\log x)^\gamma}\frac{x(cy)^k}{2^{\alpha-1}}.
\end{align*} Summing the geometric series under both $\alpha$ and $k$ yields
\begin{align*}
\ll \frac{x(cy)^{(\log x)^\gamma+1}}{2^{b\psi(x)(\log x)^\gamma-2}}.
\end{align*} By the choice of $\psi$ this is $o(x)$ and hence for almost all $n \le x,$
$$L(n) \le o((\log x)^\gamma) + \max_{p \mid n} \bigg\{H(p) + \sum_{i=1}^{k}(\alpha_i-1)\bigg\} \ll (\log n)^\gamma + \psi(x)(\log x)^\gamma \ll \psi(x)(\log x)^\gamma.$$
\end{proof}

We are now in a position to prove Theorem \ref{UpperBound}. Proposition \ref{Prop} yields the theorem provided $n$ wasn't divisible by any primes for which \eqref{upper} fails to hold, so it remains to consider when $n$ is divisible by such a prime.

\begin{proof}[Proof of Theorem \ref{UpperBound}]
Let $Y=Y(x)\rightarrow \infty$ such that $\log Y \ll (\log x)^{\gamma}.$ As in the proof of Theorem \ref{LowerBound} we know that the set of $n \le x$ which are composed entirely of primes less than or equal to $Y$ has density 0. Therefore we only need to consider values of $n$ for which there exists a prime greater than $Y$ where $H(p)>(\log p)^{\gamma}.$ Let $S(x)$ be the set $\{Y < p \le x \mid L(p)>(\log p)^{\gamma} \}.$ Since $L(p)>H(p),$ by \eqref{upper} we know that $\#S(x) \ll x\exp\big({-}(\log t)^\delta\big).$ The number of $n \le x$ where $n$ is divisible by a prime in $S(x)$ is bounded by
\begin{align*}
\sum_{n \le x}\sum_{\substack{p \in S(x) \\ p \mid n}} 1 &\le \sum_{p \in S(x)}\frac{x}{p} \\
& = \frac{x\abs{S(x)}}{x}+x\int_Y^x\frac{\abs{S(t)}dt}{t^2} \\
& \ll x\exp\big({-}(\log x)^\delta \big)+x\int_Y^x\frac{\exp\big({-}(\log t)^\delta\big) dt}{t} \\
& \ll x\exp\big({-}(\log x)^\delta \big)+\frac{x}{\log x}+\frac{x}{\log Y}
\end{align*}using partial summation. In the last line we used $\exp({-}(\log t)^{\delta}) \ll (\log t)^{-2}.$ By our choice of $Y$ the number of $n$ is $o(x)$ completing the theorem.
\end{proof}

\section{Conjecture for the normal order of $L(n).$}
The purpose of this section is to justify Conjecture \ref{conj1} assuming the conjecture in \cite{FKL} which implies $H(p)\le e\log_2 p$ for almost all $p.$ To do this, we wish to analyze the difference $L(p)-H(p)$ to show that it is not too large. As we saw in the previous section, this difference is created when a branch of the Pratt tree has $p_i^a \mid p_{i-1}-1$ where $a>1.$ Let $Y=Y(x)\le x.$ Also let a branch of the Pratt tree be $p_1\succ p_2 \succ \dots \succ p_l \succ p_{l+1} \succ \dots \succ p_k=2$ where $p_i^{a_i} \| p_{i-1}-1$ and let $l$ be the largest index such that $p_l>Y.$ We will separate our arguments into the cases where  $i<l+1, i>l+1,$ and finally $i=l+1.$

By the trivial estimate $L(n) \ll \log n$ we know $L(p_{l+1})\ll \log Y.$ By a suitable choice of $Y$ this will be made to be $o(\log_2 x).$

For $i \le l,$ we wish to know the probability that $n$ has a factor $p^a,$ where $p>Y.$ We use the following lemma.

\begin{lemma}\label{Prob}
The number of $n\le x$ for which there exists $p>Y$ where $p^a \| n$ is $O(x/Y^{a-1}).$
\end{lemma}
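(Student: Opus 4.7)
The plan is to use a simple union bound combined with a tail estimate for $\sum 1/p^a$. Since $p^a \| n$ implies $p^a \mid n$, any $n$ in the set we want to count lies in the union, over primes $p > Y$, of the arithmetic progressions $\{n \le x : p^a \mid n\}$. Each such progression contains at most $\lfloor x/p^a \rfloor$ elements, so the count is bounded by
\begin{equation*}
\sum_{p > Y} \frac{x}{p^a} \;\le\; x \sum_{p > Y} \frac{1}{p^a}.
\end{equation*}

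Next I would drop the primality condition and estimate the resulting tail by comparison with an integral. Assuming $a \ge 2$ (which is the case of interest, since the excess contribution $L(p)-H(p)$ coming from $p_i^{a_i} \| p_{i-1}-1$ only appears when $a_i \ge 2$), one has
\begin{equation*}
\sum_{p > Y} \frac{1}{p^a} \;\le\; \sum_{n > Y} \frac{1}{n^a} \;\ll\; \int_{Y}^{\infty} \frac{dt}{t^{a}} \;=\; \frac{1}{(a-1)\, Y^{a-1}} \;\ll\; \frac{1}{Y^{a-1}}.
\end{equation*}
Multiplying by $x$ gives the desired bound $O(x/Y^{a-1})$.

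There is no real obstacle here: the argument is a one-line sieve estimate followed by a standard tail bound. The only thing to be slightly careful about is that the bound is only nontrivial (and the relevant series converges) when $a \ge 2$; for $a=1$ the claimed estimate $O(x/Y^{0}) = O(x)$ is vacuous anyway, and indeed the lemma will only be applied in the companion argument with $a_i > 1$. After this lemma is in hand, one can sum the resulting contribution over admissible values of $a$ (typically as a geometric-type series in $Y$) to control the total discrepancy between $L(p)$ and $H(p)$ coming from high prime powers that appear along a branch of the Pratt tree above the cutoff $Y$.
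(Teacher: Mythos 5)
Your proposal is correct and follows essentially the same route as the paper: a union bound over primes $p>Y$ giving $\sum_{p>Y} x/p^a$, followed by the tail estimate $\sum_{p>Y} p^{-a} \ll Y^{-(a-1)}$ (which the paper asserts in one line and you justify by integral comparison). Your added remarks about the case $a=1$ being vacuous and about uniformity in $a$ are sensible but not needed beyond what the paper does.
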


\begin{proof}
The number of $n$ is bounded by
\begin{align*}
\sum_{n\le x}\sum_{\substack{p>Y \\ p \mid n}}1 \le \sum_{p>Y}\frac{x}{p^a}\ll \frac{x}{Y^{a-1}}.
\end{align*}
\end{proof}

By Lemma \ref{Prob} we should expect a proportion of at most $c/Y^a.$ This implies that the probability of $p_i^{a_i} \| p_{i-1}-1$ where $(a_2-1)+(a_3-1)+\dots+(a_l-1)=\psi(x)$ is bounded by $c^{l}/Y^{\psi(x)}.$ Since the number of possible branches of the Pratt tree is trivially bounded by $\log x,$ the probability of there existing such a string of $a_i$ is bounded by
$$1-\bigg(1-\frac{c^l}{Y^{\psi(x)}}\bigg)^{\log x}.$$ This bound will approach $0$ provided $\log x =o\big(Y^{\psi(x)}/c^l \big).$ Under the assumption that $H(p)\le e\log_2(p),$ we have $l \le H(p) \le e\log_2(p).$ Therefore a choice of  $Y=\exp((\log_2(x))^{3/4})$ and $\psi(x)=(\log_2(x))^{3/4}$ makes the contribution to $L(p)-H(p)$ be $o(\log_2(x))$ for $i\ne l+1.$

For $i=l+1,$ we have $p_{l+1}^{a_{l+1}} \mid p_l-1.$ The remaining contribution to $L(p)-H(p)$ is $a_{l+1}-1,$ if $p_{l+1}>2$ and $\lceil (a_{l+1}+1)/2\rceil$ if $p_{l+1}=2.$ For the $a_{l+1}$ to contribute a lot to $L(p),$ it must be at the end of a long prime chain, i.e. $l \gg \log_2 p,$ otherwise the conjectured value of $H(p)$ being $e\log_2 p$ would nullify the contribution. To show this is unlikely, we use a result from \cite{BKW} which implies that the number of primes at a {\em fixed} level $n$ of the Pratt tree is $\sim (\log_2 p)^n/n!.$ If we allow some dependence and use $n=c\log_2 p,$ for $0<c<\log_2 p$ we get roughly $(e/c)^{c\log_2 p}=(\log p)^{c\log(e/c)}$ primes at level $n.$ We show that the probability of none of these primes being congruent to $1$ modulo $p_{l+1}^{a_{l+1}}$ goes to $1$ provided $p_{l+1}^{a_{l+1}}$ is large enough.

Suppose we have $N$ primes. The probability that any one of them is congruent to $1$ modulo $r^a$ for a prime $r$ and positive integer $a,$ is $1/\phi(r^a).$ Assuming some independence, the probability that none of the $N$ primes are congruent to $1$ modulo $r^a$ is 
$$\bigg(1-\frac{1}{\phi(r^a)}  \bigg)^N.$$ Let $\psi$ be a function going to infintiy. Furthermore, let $r^a > N\psi(N),$ be a prime power. Since $r$ is prime, we know $\phi(r^a)\ge r^a/2.$ This bound implies the probability is bounded below by 
$$\bigg(1-\frac{2}{r^a}  \bigg)^N.$$ Using our lower bound on $r^a$ we get
$$\bigg(1-\frac{2}{r^a}  \bigg)^N \ge 1-\bigg(1-\frac{2}{N\psi(N)}  \bigg)^N \rightarrow 1.$$ 

We know wish to use the lower bound on $r^a$ to bound $a_{l+1}$ and therefore our contribution to $L(p)-H(p).$ Suppose $q_{l+1}\ne 2.$ If the level $l \approx c\log_2 p,$ for almost all $p,$ we expect $$a_{l+1}\le \frac{\log(N\log N)}{\log q_{l+1}} = \frac{c\log(e/c)}{\log q_{l+1}}\log_2 p + O(\log_3 p).$$ Combining all the contributions along any particular branch, we get \begin{equation}\label{Jamie} L(p) \le \bigg(c+\frac{c\log(e/c)}{\log q_{l+1}}\bigg)\log_2 p + o(\log _2 p).\end{equation} If $q_{l+1}=2,$ since,  $\lambda(2^a)=2^{a-2}$ we get
$$\bigg(c+\frac{c\log(e/c)}{2\log 2 }\bigg)\log_2 p + o(\log _2 p) = \bigg(c+\frac{c\log(e/c)}{\log 4 }\bigg)\log_2 p + o(\log _2 p).$$ Consequently, 3 is the value of 
$q_{l+1}$ which yields the largest coefficient of $\log_2 p$ in \eqref{Jamie}. Since $c+c\log(e/c)/\log 3 \le e$  for $0<c<e,$ we conclude that for almost all $p\le x,$ $L(p) \sim e\log_2 p.$ The reason that we can interchange $p$ and $n$ is the same reason as in Theorem \ref{LowerBound}.

It may seem obvious to conclude $L(p)\sim e\log_2 p$, since $H(p) \sim e\log_2 p.$ However, note that the function $\big(c+\frac{c\log(e/c)}{\log 2}\big)$ does not yield a maximum value of $e,$ but instead has its maximum of $2/\log(2)$ at $c=2.$ This may suggest if we had a function $L'(n)$ similar to $L(n)$ except that $\lambda'(2^a)=2^{a-1}$ for all positive integers $a,$ that we may get a different normal order, perhaps even $2\log_2 n/\log(2).$

\section*{Acknowledgements}
The author would like to thank Greg Martin for his guidance and helpful suggestions regarding Conjecture \ref{conj1}.

\end{document}